\numberwithin{equation}{section}
\theoremstyle{plain}
\newtheorem{theorem}{Theorem}[section]
\newtheorem{lemma}[theorem]{Lemma}
\theoremstyle{definition}
\theoremstyle{remark}
\newtheorem{remark}[theorem]{Remark}
\newtheorem{case[theorem]}{Case}
\def \F {\mathbb{F}}
\date{February 7, 2017}      
\author{P. Birklbauer and A. Iosevich}
\address{Department of Mathematics, University of Rochester, Rochester, NY}
\email{philipp.birklbauer@rochester.edu}
\address{Department of Mathematics, University of Rochester, Rochester, NY}
\email{iosevich@math.rochester.edu}
\thanks{This work was partially supported by the NSA Grant H98230-15-1-0319}
\title{A two-parameter finite field Erd\H os-Falconer distance problem} 
\begin{document}
\maketitle
\begin{abstract} We study the following two-parameter variant of the Erd\H os-Falconer distance problem. Given $E,F \subset {\Bbb F}_q^{k+l}$, 
$l \geq k \ge 2$, the $k+l$-dimensional vector space over the finite field with $q$ elements, let $B_{k,l}(E,F)$ be given by 
$$\{(\Vert x'-y'\Vert, \Vert x''-y'' \Vert): x=(x',x'') \in E, y=(y',y'') \in F; x',y' \in \F_q^k, x'',y'' \in {\Bbb F}_q^l \}.$$ 

We prove that if $|E||F| \geq C q^{k+2l+1}$, then $B_{k,l}(E,F)={\Bbb F}_q \times {\Bbb F}_q$. Furthermore this result is sharp if $k$ is odd. For the case of $l=k=2$ and $q$ a prime with $q \equiv 3 \mod 4$ we get that for every positive $C$ there is $c$ such that
$$ \text{if } |E||F|>C q^{6+\frac{2}{3}}\text{, then } |B_{2,2}(E,F)|>
c q^{2}.$$
\end{abstract}  

\maketitle

\section{Introduction} 

The Erd\H os-Falconer distance problem in ${\Bbb F}_q^d$ is to determine how large $E \subset {\Bbb F}_q^d$ needs to be to ensure that 
$$\Delta(E)=\{||x-y||: x,y \in E\},$$ with $||x||=x_1^2+x_2^2+\dots+x_d^2$, is the whole field ${\Bbb F}_q$, or at least a positive proportion thereof. Here and throughout, ${\Bbb F}_q$ denotes the field with $q$ elements and ${\Bbb F}_q^d$ is the $d$-dimensional vector space over this field. 

The distance problem in vector spaces over finite fields was introduced by Bourgain, Katz and Tao in \cite{BKT04}. In the form described above, it was introduced by the second listed author of this paper and Misha Rudnev (\cite{IR07}), who proved that $\Delta(E)={\Bbb F}_q$ if $|E|>2q^{\frac{d+1}{2}}$. It was shown in \cite{HIKR11} that this exponent is essentially sharp for general fields when $d$ is odd. When $d=2$, it was proved in \cite{CEHIK10} that if if $E \subset {\Bbb F}_q^2$ with $|E| \ge cq^{\frac{4}{3}}$, then $|\Delta(E)| \ge C(c)q$. We do not know if improvements of the $\frac{d+1}{2}$ exponent are possible in even dimensions $\ge 4$. We also do not know if improvements of the $\frac{d+1}{2}$ exponent are possible in any even dimension if we wish to conclude that $\Delta(E)={\Bbb F}_q$, not just a positive proportion. 

In this paper we introduce a two-parameter variant of the Erd\H os-Falconer distance problem. Given $E,F \subset {\Bbb F}_q^{k+l}$, $l \geq k \ge 2$, the $k+l$-dimensional vector space over the finite field with $q$ elements, define $B_{k,l}(E,F)$ by 
$$\{(\Vert x'-y'\Vert, \Vert x''-y'' \Vert): x=(x',x'') \in E, y=(y',y'') \in F; x',y' \in \F_q^k, x'',y'' \in {\Bbb F}_q^l \}.$$ 

This formulation introduces immediate interesting geometric complications. For example, let $k=l=2$, let 
$$ E=\{(x,0,0): ||x||=1\} \ \text{and} \ F=\{(0,0,y): ||y||=1\}.$$ 

Then $B_{2,2}(E,F)=\{(1,1)\}$. However, we are going to see that if $|E||F|$ is sufficiently large, then $B_{k,l}(E,F)={\Bbb F}_q \times {\Bbb F}_q$. Our first result is the following. 

\begin{theorem} \label{main} Let $E,F \subseteq {\Bbb F}_q^{k+l}$, $l\geq k \ge 2$. There is a $C>0$ such that
$$ \text{if } |E||F|> C q^{k+2l+1} \text{ then } B_{k,l}(E,F)=
{\Bbb F}_q \times {\Bbb F}_q.$$

If $k$ is odd, this result is best possible, up to the value of the constant $C$. 
\end{theorem}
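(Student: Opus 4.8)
\medskip
\noindent\textbf{Proof proposal.}

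\emph{Setup.} For $(t,s)\in\F\times\F$ put
\[
\nu(t,s):=\#\bigl\{(x,y)\in E\times F:\ \|x'-y'\|=t,\ \|x''-y''\|=s\bigr\},
\]
so the claim is that $\nu(t,s)>0$ for every $(t,s)$. Fix a nontrivial additive character $\chi$ of $\F$ and write $\widehat g(\xi,\zeta)=\sum_w g(w)\chi(\langle w',\xi\rangle+\langle w'',\zeta\rangle)$. Denoting by $S^{(k)}_t\subset\F^k$, $S^{(l)}_s\subset\F^l$ the spheres of radius $t$, $s$ and applying Fourier inversion to their indicator functions on $\F^{k}\times\F^{l}$, one obtains
\[
\nu(t,s)=\frac{1}{q^{k+l}}\sum_{\xi\in\F^{k}}\sum_{\zeta\in\F^{l}}\widehat{S^{(k)}_t}(\xi)\,\widehat{S^{(l)}_s}(\zeta)\,\overline{\widehat E(\xi,\zeta)}\,\widehat F(\xi,\zeta).
\]
The term $(\xi,\zeta)=(0,0)$ gives the main term $q^{-(k+l)}|S^{(k)}_t|\,|S^{(l)}_s|\,|E||F|$, which away from the degenerate cases treated below equals $(1+o(1))|E||F|/q^{2}$ (since $|S^{(k)}_t|=(1+o(1))q^{k-1}$ and $|S^{(l)}_s|=(1+o(1))q^{l-1}$). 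I would bound the rest by splitting the index set into the three ranges $\xi\neq0=\zeta$, $\xi=0\neq\zeta$ and $\xi\neq0\neq\zeta$.

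\emph{The decisive estimate.} The inputs are the Gauss-sum formula $\widehat{S^{(k)}_t}(\xi)=q^{-1}G^{k}\sum_{a\neq0}\eta(a)^{k}\chi\bigl(-at-\tfrac{\|\xi\|}{4a}\bigr)$ for $\xi\neq0$ (with $G$ the quadratic Gauss sum, $|G|=q^{1/2}$, and $\eta$ the quadratic character), together with Weil's bound for Kloosterman and Sali\'e sums, which yields $|\widehat{S^{(k)}_t}(\xi)|\le 2q^{(k-1)/2}$ for all $\xi\neq0$ \emph{whenever} $k$ is odd, or $t\neq0$, or $\|\xi\|\neq0$. On the range $\xi\neq0=\zeta$ we have $\widehat E(\xi,0)=\widehat{E'}(\xi)$, where $E'(m)=\#\{x\in E:x'=m\}$ is the fibre count of $E$ over $\F^{k}$; Cauchy--Schwarz, Plancherel and the trivial bound $\sum_m E'(m)^2\le q^{l}|E|$ (each fibre of $\F^{k+l}\to\F^k$ has $q^l$ points) then give a contribution of size at most $O\bigl(q^{(l-1)+(k-1)/2}(|E||F|)^{1/2}\bigr)$, which is $<\tfrac12|E||F|q^{-2}$ precisely when $|E||F|\gtrsim q^{k+2l+1}$ --- this is where the exponent comes from. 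The other two ranges are dominated by this one: with the analogous bound $\sum_m E''(m)^2\le q^k|E|$ for the fibre count of $E$ over $\F^l$, the range $\xi=0\neq\zeta$ contributes $O(q^{(k-1)+(l-1)/2}(|E||F|)^{1/2})$, and the range $\xi\neq0\neq\zeta$ contributes $O(q^{(k+l)/2-1}(|E||F|)^{1/2})$; since $l\ge k\ge2$, both are absorbed by the threshold $q^{k+2l+1}$.

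\emph{The obstacle.} The genuinely delicate point --- where I expect most of the work to lie --- is the contribution of the isotropic cone of $\F^k$ to the decisive term when $k$ is even and $t=0$ (and, symmetrically, the cone of $\F^l$ when $l$ is even and $s=0$): there $|\widehat{S^{(k)}_0}(\xi)|$ grows to $\sim q^{k/2}$, so the naive estimate loses a power of $q$. One should instead estimate these terms by a second-moment bound along the cone, via $\sum_{\|\xi\|=0}|\widehat{E'}(\xi)|^2=\sum_{m,n}E'(m)E'(n)\,\widehat{S^{(k)}_0}(m-n)\lesssim q^{k-1}\sum_m E'(m)^2+q^{k/2}|E|^2$, which reabsorbs the loss unless one of $|E|,|F|$ is abnormally large --- a regime that can be dispatched directly by pigeonholing on fibres. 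The case $k=2$ is the extreme instance: when $-1$ is a non-square in $\F$ the only solution of $\|u\|=0$ in $\F^2$ is $u=0$, so $(0,s)$ is attained only through pairs with $x'=y'$, and this subcase is genuinely different --- it is what forces the separate, weaker statement for $l=k=2$, $q\equiv3\bmod 4$.

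\emph{Sharpness for odd $k$.} Since $k\ge3$ is odd, the form $\|x\|=x_1^2+\cdots+x_k^2$ on $\F^k$ has a maximal totally isotropic subspace $V$, with $\dim V=(k-1)/2$; set $W:=V^{\perp}$, so $\dim W=(k+1)/2$ and $V\subset W$. The radical of $\|\cdot\||_W$ is $W\cap W^{\perp}=V$, so the restricted form descends to a nondegenerate form on $W/V\cong\F$; hence $\|w\|=\lambda\,\overline{w}^{2}$ for a fixed $\lambda\neq0$ and the quotient map $w\mapsto\overline{w}$. Consequently $\{\|w_1-w_2\|:w_1,w_2\in W\}=\lambda\cdot\{z^2:z\in\F\}$, a proper subset of $\F$ since $q$ is odd. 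Taking $E=F=W\times\F^l\subset\F^{k+l}$ gives $|E|=|F|=q^{(k+1)/2+l}$, hence $|E||F|=q^{k+2l+1}$, while
\[
B_{k,l}(E,F)=\bigl\{\|w_1-w_2\|:w_i\in W\bigr\}\times\bigl\{\|z_1-z_2\|:z_i\in\F^l\bigr\}=\bigl(\lambda\cdot\{z^2\}\bigr)\times\F\ \subsetneq\ \F\times\F,
\]
the second factor being all of $\F$ since $l\ge2$. Letting $q\to\infty$ this shows the exponent $k+2l+1$ cannot be lowered, i.e.\ the theorem is best possible up to the value of $C$.
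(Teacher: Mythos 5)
Your main argument is the paper's argument: the same Fourier inversion identity for the counting function $\nu(t,s)$, the same three-way split of frequency space, the Weil/Kloosterman decay of $\widehat{S^{(k)}_t}$ off the origin, and, for the decisive range $\xi\ne 0=\zeta$, exactly the paper's Lemma \ref{lemmaEhat(m,0)} rewritten in fibre-count language (your $\sum_m E'(m)^2\le q^l|E|$). The exponent arithmetic matches, and the threshold $q^{k+2l+1}$ arises from the same range in both treatments. Your sharpness example is also, in substance, the paper's: they quote \cite{HIKR11} for a set $E_1\subset\F_q^k$ of size $\gtrsim q^{(k+1)/2}$ with $\Delta(E_1)\ne\F_q$ and take $E=E_1\times\F_q^l$, and the set being quoted is precisely your $W=V^\perp$ for $V$ a maximal totally isotropic subspace; your verification that $\|\cdot\|$ descends to $\lambda\,\overline{w}^2$ on $W/V$ is correct, so you have merely made the citation self-contained.

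The one genuine divergence is your ``obstacle'' paragraph, and it cuts both ways. You are right that the decay estimate degrades to $\sim q^{k/2}$ for $\widehat{S^{(k)}_0}$ on the isotropic cone when $k$ is even, so the pairs $(0,s)$ and $(t,0)$ need separate care; the paper's Lemma \ref{kloosterman} is stated only for $t\ne 0$ and is nevertheless applied to all $a,b$ without comment, so you have flagged something the published argument elides. However, your proposed repair does not close the case as written: in the bound $\sum_{\|\xi\|=0}|\widehat{E'}(\xi)|^2\lesssim q^{k-1}\sum_m E'(m)^2+q^{k/2}|E|^2$, the second term dominates for a generic spread-out set (where $\sum_m E'(m)^2\approx |E|^2/q^k$, so the first term is only $\approx |E|^2/q$), and feeding $q^{k/2}|E|^2$ back into the mixed term produces a contribution of order $q^{-1}|E||F|$, which swamps the main term $\asymp q^{-2}|E||F|$. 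So ``abnormally large $|E|$'' is not the failure regime; one needs a sharper off-diagonal treatment separating $\|m-n\|=0$ from $\|m-n\|\ne 0$ (or an appeal to the known two-set $t=0$ analysis). Since the paper offers nothing at all here, read this as an incompleteness you share with the source rather than a defect relative to it; everywhere the paper's proof is complete, yours is too.
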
 

\vskip.125in 

When $k$ is even, we can hope to improve the exponent a bit. We are able to accomplish this in the case $k=l=2$. Our second result is the following. 

\begin{theorem} \label{main2}
Let $q$ a prime with $q \equiv 3 \mod 4$.
For every positive $C$ there is $c$ such that for $E,F \subseteq \F_q^{2+2}$
$$ \text{if } |E||F|>C q^{6+\frac{2}{3}}\text{, then } |B_{2,2}(E,F)|>
c q^{2}.$$

While this result probably is not sharp, we show the exponent cannot go below 6.
\end{theorem}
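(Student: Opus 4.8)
\medskip
\noindent\emph{Proof idea.} The plan is to run the $L^2$ (energy) method for distance sets, but to Fourier--expand separately in the two radial variables, so that the real work is reduced to the planar estimate behind \cite{CEHIK10}. Fix a nontrivial additive character $\chi$ of $\F_q$ and set $\nu(t,s)=\#\{(x,y)\in E\times F:\ \|x'-y'\|=t,\ \|x''-y''\|=s\}$, so $\sum_{t,s}\nu(t,s)=|E||F|$ and, by Cauchy--Schwarz, $|B_{2,2}(E,F)|\ge (|E||F|)^2/\mathcal N$ with $\mathcal N:=\sum_{t,s}\nu(t,s)^2$. Thus it suffices to show $\mathcal N\lesssim q^{-2}(|E||F|)^2$ whenever $|E||F|$ exceeds a large multiple of $q^{6+2/3}$; the formulation with an arbitrary constant $C$ then follows by tracking constants. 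Put $R_{a,b}=\sum_{(x,y)\in E\times F}\chi\bigl(a\|x'-y'\|+b\|x''-y''\|\bigr)$, so that $\mathcal N=q^{-2}\sum_{a,b\in\F_q}|R_{a,b}|^2$ with $R_{0,0}=|E||F|$. Writing $g_E(u)=\#\{x'':(u,x'')\in E\}$ and $f_E(w)=\#\{x':(x',w)\in E\}$ (and similarly for $F$), one has $R_{a,0}=\sum_{u,v}g_E(u)g_F(v)\chi(a\|u-v\|)$ and $R_{0,b}=\sum_{w,z}f_E(w)f_F(z)\chi(b\|w-z\|)$; splitting according to which of $a,b$ is $0$ and using orthogonality gives
$$\mathcal N=\frac{\Lambda+\Lambda'}{q}-\frac{(|E||F|)^2}{q^2}+\frac{1}{q^2}\sum_{a,b\ne 0}|R_{a,b}|^2,$$
where $\Lambda=\sum_t\bigl(\sum_{\|u-v\|=t}g_E(u)g_F(v)\bigr)^2$ is the $\F_q^2$ distance energy of the first--block fibre weights $g_E,g_F$ and $\Lambda'$ is the same for $f_E,f_F$ on the second block. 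The two--parameter problem has thus split into two weighted planar distance problems plus a fully four--dimensional remainder.

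\smallskip
\noindent\emph{The remainder.} For $a,b\ne 0$ the form $(v',v'')\mapsto a\|v'\|+b\|v''\|$ on $\F_q^4$ is nondegenerate, so its exponential sum has modulus $q^2$; writing $R_{a,b}=\sum_{v\in\F_q^4}\chi(a\|v'\|+b\|v''\|)(E\star F)(v)$ with $(E\star F)(v)=\#\{(x,y)\in E\times F:\ x-y=v\}$ and applying Plancherel and Cauchy--Schwarz gives $|R_{a,b}|\le q^2\sqrt{|E||F|}$. Hence $q^{-2}\sum_{a,b\ne 0}|R_{a,b}|^2\le q^4|E||F|\le q^{-2}(|E||F|)^2$ as soon as $|E||F|\ge q^6$ --- and it is this term that prevents the exponent from falling below $6$.

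\smallskip
\noindent\emph{The weighted planar energies.} The heart of the matter is $\Lambda,\Lambda'\lesssim q^{-1}(|E||F|)^2$. Here $g_E,g_F$ have masses $|E|,|F|$ and are bounded pointwise by $q^2$. I would decompose $g_E=\sum_{j\le q^2}\mathbf{1}_{E_j}$, $g_F=\sum_{k\le q^2}\mathbf{1}_{F_k}$ by layer cake, so that Minkowski's inequality gives $\Lambda^{1/2}\le\sum_{j,k}\bigl(\sum_t\nu_{E_j,F_k}(t)^2\bigr)^{1/2}$ with $\nu_{A,B}(t)=\#\{(u,v)\in A\times B:\ \|u-v\|=t\}$. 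The $L^2$ method behind the planar result of \cite{CEHIK10} (available since $q\equiv3\bmod 4$) yields, in its bipartite form, a bound of the shape $\sum_t\nu_{A,B}(t)^2\lesssim q^{-1}|A|^2|B|^2+q^{5/3}|A||B|$, i.e.\ $\|\nu_{A,B}\|_{\ell^2}\lesssim q^{-1/2}|A||B|+q^{5/6}(|A||B|)^{1/2}$. Summing over $j,k$ with $\sum_j|E_j|=|E|$ and $\sum_j|E_j|^{1/2}\le q|E|^{1/2}$ (at most $q^2$ nonempty layers) produces $\Lambda^{1/2}\lesssim q^{-1/2}|E||F|+q^{17/6}(|E||F|)^{1/2}$, so $\Lambda\lesssim q^{-1}(|E||F|)^2+q^{17/3}|E||F|$, and the second term is $\lesssim q^{-1}(|E||F|)^2$ precisely when $|E||F|\gtrsim q^{20/3}=q^{6+2/3}$; likewise for $\Lambda'$. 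The exponent $20/3$ is natural: a $q^2$--bounded weight of mass $|E|$ is worst when concentrated (value $\approx q^2$ on $\approx|E|/q^2$ fibres), which makes $\Lambda$ the bipartite planar energy of sets of sizes $|E|/q^2$ and $|F|/q^2$, and the $4/3$ estimate asks their product to be $\gtrsim q^{8/3}$. Together with the previous paragraph this yields $\mathcal N\lesssim q^{-2}(|E||F|)^2$, hence $|B_{2,2}(E,F)|\gtrsim q^2$. I expect this to be the delicate step: one must extract the bipartite (and weighted) form of the \cite{CEHIK10} estimate and check that the crude layer--cake bound --- sharp only when the weights are everywhere $\lesssim 1$ or everywhere $\approx q^2$ --- really costs only the factor $q^{17/3}$.

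\smallskip
\noindent\emph{The exponent cannot go below $6$.} Let $A\subseteq\F_q^2$ be a translate of $\{0,1,\dots,m-1\}^2$ with $m=\lfloor q^{(1-\varepsilon)/2}\rfloor$, and put $E=F=A\times\F_q^2$. Since $\|u-v\|$ runs over all of $\F_q$ as $u,v$ range over $\F_q^2$, we get $B_{2,2}(E,F)=\Delta(A)\times\F_q$, whence $|B_{2,2}(E,F)|=q\,|\Delta(A)|\le q\,|A-A|\lesssim q\cdot m^2\asymp q^{2-\varepsilon}=o(q^2)$, while $|E||F|=(m^2q^2)^2\asymp q^{6-2\varepsilon}$. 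As $\varepsilon>0$ is arbitrary, no exponent below $6$ can force $|B_{2,2}(E,F)|\gtrsim q^2$.
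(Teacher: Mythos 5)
Your route is genuinely different from the paper's. The paper runs the whole argument in $\F_q^4$: it bounds $\sum_{a,b}s(a,b)^2$ by an $SO_2(\F_q)\times SO_2(\F_q)$ group-action count (Lemma \ref{norm and SOd}, which is where $q\equiv 3\bmod 4$ enters), splits the resulting Fourier sum over $(m',m'')$ into the zero, mixed and nonzero pieces, and handles the decisive mixed piece with a restriction estimate for $\widehat{E}(m',\vec0)$ on circles proved by the Fefferman trick (Lemma \ref{lemmaEhat(m,0)sphere}), arriving at the bound $q^{-4}(|E||F|)^2|SO_2(\F_q)|^2+q^4|E||F|+Cq^3(|E||F|)^{5/4}$. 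You instead Fourier-expand in the two radial parameters, so that the $a,b\neq0$ block is a clean four-dimensional Gauss-sum term (your $|R_{a,b}|\le q^2\sqrt{|E||F|}$ is correct and reproduces the paper's $q^4|E||F|$, hence the same obstruction at exponent $6$), while the two mixed blocks become weighted planar distance energies $\Lambda,\Lambda'$ of the fibre-counting functions. That reduction, the identity for $\mathcal N$, and the sharpness example are all correct; your example is essentially the transpose of the paper's ($A\times\F_q^2$ versus $\F_p^2\times L$) and works equally well.

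The gap is exactly where you flagged it: the bipartite weighted planar estimate is asserted, not proved, and it is not available off the shelf in \cite{CEHIK10}, which establishes a pinned, single-set distance result rather than a two-set $\ell^2$ energy inequality. The estimate you need is true: the group-action method of \cite{BHIPR17} gives $\sum_t\nu_{A,B}(t)^2\lesssim q^{-1}|A|^2|B|^2+q\,(|A||B|)^{5/4}$ for $A,B\subset\F_q^2$, which implies your stated form with error $q^{5/3}|A||B|$ (directly when $|A||B|\le q^{8/3}$, and for larger products because the main term then dominates), and, fed through your layer cake via $\sum_j|E_j|^{5/8}\le q^{3/4}|E|^{5/8}$, yields $\Lambda/q\lesssim q^{-2}(|E||F|)^2+q^3(|E||F|)^{5/4}$ --- exactly the paper's mixed term and the same threshold $q^{20/3}$. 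But proving that planar input requires precisely the two ingredients the paper deploys directly in four dimensions: the unique-rotation Lemma \ref{norm and SOd} and a circle restriction estimate of the type of Lemma \ref{lemmaEhat(m,0)sphere}. So your argument is a legitimate alternative organization, and the separation into a Gauss-sum block plus two planar energies is conceptually clarifying, but as written it outsources its heart to a lemma that still has to be proved by the paper's own method. The layer-cake loss you worried about is, by the computation above, affordable.
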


\section{Proof of Theorem \ref{main}} 

We begin with a quick review of Fourier analytic preliminaries. 

\vskip.125in 

Let $\chi$ be the principal additive character on ${\Bbb F}_q$. Given $f: {\Bbb F}^d_q \to \mathbb{C}$, define 
$$ \widehat{f}(m)=q^{-d} \sum_{x \in {\Bbb F}_q^d} \chi(-x \cdot m) f(x).$$ 

Observe that 
$$ f(x)=\sum_{m \in {\Bbb F}_q^d} \chi(x \cdot m) \widehat{f}(m),$$
$$ \sum_{m \in {\Bbb F}_q^d} {|\widehat{f}(m)|}^2=q^{-d} \sum_{x \in {\Bbb F}_q^d} {|f(x)|}^2$$ and 
$$ \sum_{x \in {\Bbb F}_q^d} \chi(x \cdot m)=0 \ \text{if} \ m \not=\vec{0} \ \text{and} \ q^d \ \text{otherwise}.$$ 

\begin{lemma} \label{kloosterman} Let $S_t^{d-1}=\{x \in {\Bbb F}_q^d: \Vert x\Vert=t \}$, where $\Vert x\Vert =x_1^2+\dots+x_d^2$. If $t \not=0$ and $m \not=\vec{0}$, then 
$$ |\widehat{S}_t^{d-1}(m)| \leq 2q^{-\frac{d+1}{2}}.$$ 
\end{lemma}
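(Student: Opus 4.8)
The plan is to expand the indicator function of the sphere in additive characters, collapse each coordinate to a one-dimensional Gauss sum, and finish with a square-root-cancellation estimate for a Kloosterman (or, in odd dimension, a Salié) sum. Throughout $q$ is odd, so that $2$ is invertible in ${\Bbb F}_q$ and $\Vert\cdot\Vert$ is a nondegenerate quadratic form. First I would write
$$S_t^{d-1}(x)=\frac{1}{q}\sum_{s\in{\Bbb F}_q}\chi\bigl(s(\Vert x\Vert-t)\bigr),$$
so that, interchanging the sums,
$$\widehat{S_t^{d-1}}(m)=q^{-d-1}\sum_{s\in{\Bbb F}_q}\chi(-st)\sum_{x\in{\Bbb F}_q^d}\chi\bigl(s\Vert x\Vert-x\cdot m\bigr).$$
The term $s=0$ drops out, since it contributes $q^{-d-1}\sum_{x}\chi(-x\cdot m)=0$ because $m\neq\vec{0}$.

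For $s\neq 0$ the inner sum factors as $\prod_{j=1}^{d}\sum_{u\in{\Bbb F}_q}\chi(su^2-um_j)$, and completing the square in each factor I would obtain
$$\sum_{x\in{\Bbb F}_q^d}\chi\bigl(s\Vert x\Vert-x\cdot m\bigr)=\chi\!\left(-\frac{\Vert m\Vert}{4s}\right)g(s)^d,$$
where $g(s)=\sum_{u\in{\Bbb F}_q}\chi(su^2)=\eta(s)\,g(1)$, $\eta$ is the quadratic multiplicative character, and $|g(1)|=q^{1/2}$. This turns the formula into
$$\widehat{S_t^{d-1}}(m)=q^{-d-1}\,g(1)^d\sum_{s\in{\Bbb F}_q^\times}\eta(s)^d\,\chi\!\left(-st-\frac{\Vert m\Vert}{4s}\right).$$

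Finally I would split on the parity of $d$. If $d$ is even then $\eta(s)^d=1$ on ${\Bbb F}_q^\times$ and the remaining sum is a Kloosterman sum $K(-t,-\Vert m\Vert/4)$, which the Weil bound controls by $2q^{1/2}$ since $-t\neq 0$. If $d$ is odd then $\eta(s)^d=\eta(s)$ and the sum is a Salié sum, which by its classical explicit evaluation also has modulus at most $2q^{1/2}$. Either way, using $|g(1)^d|=q^{d/2}$,
$$|\widehat{S_t^{d-1}}(m)|\le q^{-d-1}\cdot q^{d/2}\cdot 2q^{1/2}=2q^{-\frac{d+1}{2}},$$
which is the claimed bound. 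The only serious ingredient is the square-root cancellation for Kloosterman and Salié sums; everything else is bookkeeping. The point I would be most careful about is the degenerate case $\Vert m\Vert=0$ (which does occur over a finite field), where the $s$-sum is not a generic Kloosterman or Salié sum but collapses to $\sum_{s\neq 0}\eta(s)^d\chi(-st)$, equal to $-1$ for $d$ even and to $\eta(-1/t)\,g(1)$ for $d$ odd; in both cases its modulus is at most $q^{1/2}\le 2q^{1/2}$, so the stated estimate still goes through.
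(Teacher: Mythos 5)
Your argument is correct and is exactly the standard proof of this estimate: the paper does not prove the lemma itself but defers to \cite{IR07} (see also \cite{MMST96}, \cite{IK04}), where the indicator of the sphere is expanded in additive characters, the Gaussian sums are evaluated by completing the square, and the resulting Kloosterman (for $d$ even) or Sali\'e (for $d$ odd) sum is bounded by $2q^{1/2}$. Your care with the degenerate case $\Vert m\Vert=0$ is exactly the point that is easiest to overlook, and you have handled it correctly.
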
 

\begin{lemma} \label{spherecount} With the notation above, 
$$ |S_t^{d-1}|=q^{d-1}+O(q^{d-2}).$$ 
\end{lemma}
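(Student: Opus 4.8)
The plan is to count the points of $S_t^{d-1}$ directly with additive characters and reduce everything to the one-variable quadratic Gauss sum. First I would use orthogonality of $\chi$ over $\mathbb{F}_q$ to detect the constraint $\Vert x\Vert=t$, writing
$$|S_t^{d-1}| = \sum_{x \in \mathbb{F}_q^d} q^{-1}\sum_{s \in \mathbb{F}_q} \chi\bigl(s(\Vert x\Vert-t)\bigr) = q^{-1}\sum_{s \in \mathbb{F}_q}\chi(-st)\sum_{x\in\mathbb{F}_q^d}\chi(s\Vert x\Vert).$$
Since $\Vert x\Vert = x_1^2+\dots+x_d^2$ splits as a sum over coordinates and $\chi$ is additive, the inner sum factors as $\bigl(\sum_{u\in\mathbb{F}_q}\chi(su^2)\bigr)^d$.

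Next I would evaluate the one-dimensional quadratic sum. For $s=0$ it equals $q$, producing the term $q^{-1}\cdot q^d = q^{d-1}$, which is the claimed main term. For $s\neq 0$ the standard quadratic Gauss sum identity gives $\sum_u \chi(su^2) = \eta(s)\,g$, where $\eta$ is the quadratic multiplicative character and $g=\sum_u \eta(u)\chi(u)$ satisfies $|g|=\sqrt q$ (recall $q$ is odd). Hence
$$|S_t^{d-1}| - q^{d-1} = q^{-1} g^d \sum_{s \neq 0}\eta(s)^d\,\chi(-st).$$

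The remaining step is to bound the $s$-sum, and this is where the parity of $d$ enters, the one genuine subtlety. When $d$ is even, $\eta(s)^d\equiv 1$, so for $t\neq 0$ the sum collapses to $\sum_{s\neq 0}\chi(-st)=-1$, leaving an error of size $q^{-1}|g|^d = q^{d/2-1}$. When $d$ is odd, $\eta(s)^d=\eta(s)$ and the substitution $s\mapsto (-t)^{-1}s$ turns the sum into a Gauss sum $\sum_s \eta(s)\chi(-st)=\eta(-t)\,g$ for $t\neq 0$, leaving an error of size $q^{-1}|g|^{d+1} = q^{(d-1)/2}$. Since $d\geq 2$, both exponents are at most $d-2$, so in either case the error is $O(q^{d-2})$, as required. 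Note that the crude bound $q^{-1}(q-1)|g|^d = O(q^{d/2})$ already suffices once $d\geq 4$; the Gauss-sum cancellation above is only needed to reach the sharp $O(q^{d-2})$ in the small cases $d=2,3$.

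I expect the main obstacle to be purely bookkeeping: correctly invoking the evaluation and modulus of the quadratic Gauss sum and tracking the two parity cases to confirm the exponent never exceeds $d-2$. I would also record that the estimate is meant for $t\neq 0$, consistent with the notation carried over from Lemma \ref{kloosterman}; the case $t=0$ must be excluded when $d$ is even, since there $\sum_{s\neq 0}\chi(0)=q-1$ and the correction term jumps to order $q^{d/2}$, which exceeds $q^{d-2}$ already at $d=2$.
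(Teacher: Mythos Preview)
Your argument is correct and is exactly the standard Gauss-sum computation one finds in the cited reference \cite{IR07}; the paper itself does not supply a proof of this lemma but simply defers to that source, so there is nothing further to compare. Your caveat about $t=0$ in even dimension (in particular $d=2$) is also accurate and worth keeping.
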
 

\vskip.125in 

For a proof of Lemma \ref{kloosterman} and Lemma \ref{spherecount}, see \cite{IR07}. See also \cite{MMST96} and \cite{IK04}. See \cite{V11} on a spectral graph theory viewpoint on similar phenomena. 

\vskip.125in 

We now move on to the proof of Theorem \ref{main}. Let $E(X), F(Y)$ denote the indicator functions of $E,F$, respectively, where $X=(x',x'')$ and $Y=(y',y'')$. Consider 
\begin{align}
\nonumber &\sum_{\Vert x'-y'\Vert =a; \Vert x''-y''\Vert =b} E(X)F(Y) 
\\ \nonumber &=\sum_{X,Y} S_a^{k-1}(x'-y')S_b^{l-1}(x''-y'') E(X)F(Y)
\\ \nonumber &=\sum_{X,Y,m',m''} \widehat{S}_a^{k-1}(m')\widehat{S}_b^{l-1}(m'')\chi((x'-y') \cdot m') \chi((x''-y'') \cdot m'') E(X)F(Y)
\\ \nonumber &=\sum_{X,Y,m',m''} \widehat{S}_a^{k-1}(m')\widehat{S}_b^{l-1}(m'')\chi((X-Y) \cdot M) E(X)F(Y)
\\ \label{key} &=q^{2(k+l)} \sum_M \widehat{S}_a^{k-1}(m')\widehat{S}_b^{l-1}(m'') \widehat{E}(M)\overline{\widehat{F}(M)}.
\end{align}

We shall now break up the sum into three pieces. The first piece is the sum over $m'=m''=\vec{0}$. The second piece is the sum over $m' \not=\vec{0}, m'' \not=\vec{0}$. The third sum is over $m'=\vec{0}, m'' \not=\vec{0}$ or $m''=\vec{0}, m' \not=\vec{0}$. 

\subsection{The term $m'=\vec{0}, m''=\vec{0}$} 

Plugging this condition into (\ref{key}) we obtain 
\begin{equation} \label{mainterm} |E| |F| |S_a^{k-1}| |S_b^{l-1}| q^{-k-l}.\end{equation} 

\subsection{The term $m' \not=\vec{0}, m'' \not=\vec{0}$}
Using Cauchy-Schwarz we see that
$$
\left( \sum_{m' \not=\vec{0} \neq m''} \!\!\!\! \widehat{S}_a^{k-1}(m')\widehat{S}_b^{l-1}(m'') \widehat{E}(M)\overline{\widehat{F}(M)} \right)^2 
\leq \sum_{m' \not=\vec{0} \neq m''} \!\!\!\! |\widehat{S}_a^{k-1}(m')\widehat{S}_b^{l-1}(m'') \widehat{E}(M)|^2 
\sum_{n' \not=\vec{0} \neq n''} \!\!\!\! |\widehat{F}(N)|^2
$$
Now for the first sum we see by using Lemma \ref{kloosterman} and Plancherel that it is bounded by
$$ \left(2q^{-\frac{k+1}{2}}\right)^2 \left( 2q^{-\frac{l+1}{2}} \right)^2  \sum_M {|\widehat{E}(M)|}^2=16 q^{-(k+l+2)} q^{-k-l}|E|.$$
And again by Plancherel
$$
\sum_{N \neq \vec{0}} |\widehat{F}(N)|^2 \leq q^{-k-l}|F|
$$
Therefore
$$
q^{2(k+l)} \sum_{M \neq \vec{0}} \widehat{S}_a^{k-1}(m')\widehat{S}_b^{l-1}(m'') \widehat{E}(M)\overline{\widehat{F}(M)} \leq 4 q^{\frac{k+l}{2}-1} \sqrt{|E||F|}
$$
\subsection{The term $m' \not=\vec{0}, m''=\vec{0}$} 

We obtain 
\begin{align}
\label{mixedterm}
 q^{2(k+l)} \cdot q^{-l}|S_b^{l-1}| \sum_{m' \not=\vec{0}} \widehat{S}_a^{k-1}(m') \widehat{E}\left(m', \vec{0} \right) \overline{\widehat{F}\left(m', \vec{0}\right)} 
\end{align}

Very similarly to the previous case we see
\begin{align*}
\left(\sum_{m' \not=\vec{0}} \widehat{S}_a^{k-1}(m') |\widehat{E}\left(m', \vec{0} \right) \overline{\widehat{F}\left(m', \vec{0}\right)} \right)^2
&\leq \sum_{m' \not=\vec{0}} {\left| \widehat{S}_a^{k-1}(m') \widehat{E}\left(m', \vec{0}\right)\right|}^2
\sum_{n' \not=\vec{0}} {\left|\widehat{F}\left(n', \vec{0}\right)\right|}^2 
\\
& \leq 4 q^{-k-1} \sum_{m'} {\left|\widehat{E}\left(m', \vec{0}\right)\right|}^2 \sum_{n'} {\left|\widehat{F}\left(n', \vec{0}\right)\right|}^2
\end{align*}

And furthermore we have the following
\begin{lemma}
\label{lemmaEhat(m,0)}
For $E \subset \F_q^{k+l}$ we have
$$
\sum_{m' \in \F_q^k} {\left|\widehat{E}\left(m', \vec{0}\right)\right|}^2
\leq q^{-k-l} {|E|}
$$
\end{lemma}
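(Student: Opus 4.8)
The plan is to recognize $\widehat{E}(m',\vec 0)$ as essentially the $k$-dimensional Fourier transform of the pushforward of $E$ onto the first factor $\F_q^k$, and then apply Plancherel together with a trivial pointwise bound. Concretely, I would first define $g\colon\F_q^k\to\mathbb{Z}_{\geq 0}$ by
$$ g(x')=\sum_{x''\in\F_q^l} E(x',x''), $$
the number of points of $E$ lying above $x'$. Writing out the definition of the Fourier transform on $\F_q^{k+l}$ and summing the $x''$ variable out first, one gets
$$ \widehat{E}(m',\vec 0)=q^{-(k+l)}\sum_{x',x''}\chi(-x'\cdot m')E(x',x'')=q^{-l}\cdot q^{-k}\sum_{x'}\chi(-x'\cdot m')g(x')=q^{-l}\,\widehat{g}(m'), $$
where $\widehat{g}$ denotes the Fourier transform taken in $\F_q^k$.

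Next I would invoke Plancherel in $\F_q^k$, namely $\sum_{m'}|\widehat{g}(m')|^2=q^{-k}\sum_{x'}|g(x')|^2$, to obtain
$$ \sum_{m'\in\F_q^k}\bigl|\widehat{E}(m',\vec 0)\bigr|^2=q^{-2l}\sum_{m'}|\widehat{g}(m')|^2=q^{-2l-k}\sum_{x'\in\F_q^k}g(x')^2. $$
The final step is the only place any inequality enters: since $g(x')\leq q^l$ for every $x'$ (there are only $q^l$ possible values of $x''$), we have $g(x')^2\leq q^l g(x')$, and therefore $\sum_{x'}g(x')^2\leq q^l\sum_{x'}g(x')=q^l|E|$. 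Substituting this bound gives $\sum_{m'}|\widehat{E}(m',\vec 0)|^2\leq q^{-2l-k}\cdot q^l|E|=q^{-k-l}|E|$, which is exactly the claim.

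There is no real obstacle here: the lemma is a routine bookkeeping computation, and the only subtlety is keeping the normalization factors $q^{-l}$, $q^{-k}$, $q^{-(k+l)}$ straight when partially summing the Fourier transform. If anything needs care it is simply being explicit that the partial sum over $x''$ of the character $\chi(-X\cdot M)$ at $M=(m',\vec 0)$ contributes the clean factor $q^l$ rather than something oscillatory, which is immediate because the $x''$-dependence of the phase disappears when $m''=\vec 0$.
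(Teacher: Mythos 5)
Your proof is correct and is essentially the paper's argument in different packaging: the paper expands $|\widehat{E}(m',\vec 0)|^2$ and sums over $m'$ using orthogonality (which is exactly your Plancherel step for the fiber-count function $g$), arriving at $q^{-k-2l}\sum_{x',x'',y''}E(x',x'')E(x',y'')=q^{-k-2l}\sum_{x'}g(x')^2$, and then bounds one factor trivially by $1$, which is the same as your estimate $g(x')\le q^l$. All normalizations in your computation check out against the paper's conventions.
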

\begin{proof}
\begin{align*}
\sum_{m' \in \F_q^k} {\left|\widehat{E}\left(m', \vec{0}\right)\right|}^2
& = \sum_{m' \in \F_q^k} q^{-2(k+l)} \sum_{\substack{x',y' \in \F_q^k \\ x'',y'' \in \F_q^l}} \chi((x'-y')m') E(x',x'') E(y',y'') \\
& = q^{-k-2l} \sum_{\substack{x' \in \F_q^k \\ x'',y'' \in \F_q^l}} E(x',x'') \underbrace{E(x',y'')}_{\leq 1}
\\
& \leq q^{-k-l} {|E|}. \\
\end{align*}
\end{proof}

So now we can bound \eqref{mixedterm}.
$$
 q^{2(k+l)} \cdot q^{-l}|S_b^{l-1}| \cdot 2 q^\frac{-k-1}{2} q^{-k-l} \sqrt{|E||F|}=
 2 q^\frac{k-1}{2} |S_b^{l-1}| \sqrt{|E||F|}
$$

Putting everything together we see that 
\begin{equation} \label{almostdone}  \sum_{\Vert x'-y'\Vert =a; \Vert x''-y''\Vert =b} E(X)F(Y)=|E||F| \frac{|S_a^{k-1}|}{q^k} \frac{|S_b^{l-1}|}{q^l}+{\mathcal D}, \end{equation} where 
$$ |{\mathcal D}| \leq 2q^{\frac{k-1}{2}} \sqrt{|E||F|}|S_b^{l-1}|+2q^{\frac{l-1}{2}} \sqrt{|E||F|}|S_a^{k-1}|+4 q^{\frac{k+l}{2}-1}\sqrt{|E||F|}.$$

By a direct calculation (remembering that $l\geq k$) and using Lemma \ref{spherecount}, the right hand side of (\ref{almostdone}) is positive if 
$$ |E||F|> 16 q^{k+2l+1},$$ as desired. 

Finally for the sharpness of this result in the case $k$ odd, we need the following theorem from \cite{HIKR11}.

\begin{theorem}
\label{Thm2.7}
There exists $c > 0$ and $E \subset \F_q^d$, $d$ odd, such that
$$
|E| \geq c q^\frac{d+1}{2} \text{ and } \Delta(E) \neq \F_q.
$$

\end{theorem}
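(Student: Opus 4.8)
The plan is to exhibit an explicit extremal set: the orthogonal complement of a maximal totally isotropic subspace of $\F_q^d$ for the form $\|x\|=x_1^2+\cdots+x_d^2$ (throughout $q$ is odd). First I would invoke the classification of quadratic forms over finite fields: every quadratic form in at least three variables over $\F_q$ is isotropic, so the anisotropic part of $\|\cdot\|$ on $\F_q^d$ has dimension at most two, and since $d$ is odd it has dimension exactly one. Hence the Witt index of $\|\cdot\|$ equals $\frac{d-1}{2}$, so there is a subspace $U\subset\F_q^d$ with $\dim U=\frac{d-1}{2}$ on which $\|\cdot\|$ vanishes identically; then the associated symmetric bilinear form $B(x,y)=x\cdot y$ also vanishes on $U$, since $B(u,u')=\tfrac12(\|u+u'\|-\|u\|-\|u'\|)$, so $U\subseteq U^{\perp}$.

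Next I set $E:=U^{\perp}$. Non-degeneracy of $B$ gives $\dim E=d-\frac{d-1}{2}=\frac{d+1}{2}$, hence $|E|=q^{\frac{d+1}{2}}$, which is the required lower bound (with $c=1$). Since $\|u+v\|=\|v\|$ whenever $u\in U$ and $v\in U^{\perp}$, the form $\|\cdot\|$ descends to the one-dimensional quotient $U^{\perp}/U$; if it were identically zero there, then $U^{\perp}$ itself would be totally isotropic of dimension $\frac{d+1}{2}>\frac{d-1}{2}$, contradicting the value of the Witt index. So I may choose $v\in U^{\perp}$ with $\beta:=\|v\|\neq 0$ whose image spans $U^{\perp}/U$, giving $E=U\oplus\F_q v$. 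For $z=u+tv\in E$ one gets $\|z\|=\|u\|+2t\,B(u,v)+t^2\|v\|=\beta t^2$, using $\|u\|=0$ and $B(u,v)=0$. Because $E-E=E$, this yields $\Delta(E)=\{\beta t^2:t\in\F_q\}$, a set of size $\frac{q+1}{2}<q$, so $\Delta(E)\neq\F_q$.

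I do not anticipate a serious difficulty; the one point that deserves to be written out carefully is the Witt-index computation, i.e. that the concrete form $\sum x_i^2$ genuinely admits a totally isotropic subspace of the full dimension $\frac{d-1}{2}$ for every odd $q$, which is standard but should not simply be asserted. For completeness, this is exactly what is needed for the sharpness remark preceding the statement: with $k$ odd, apply the construction in $\F_q^k$ to obtain $E_0\subset\F_q^k$ with $|E_0|=q^{\frac{k+1}{2}}$ and some $a_0\notin\Delta(E_0)$, and set $E=F=E_0\times\F_q^l$. Then $|E||F|=q^{k+2l+1}$, while $B_{k,l}(E,F)=\Delta(E_0)\times\F_q$ (the second coordinate is unrestricted since $x'',y''$ range over all of $\F_q^l$ and $l\geq 2$), so $(a_0,0)\notin B_{k,l}(E,F)$ and the exponent $k+2l+1$ in Theorem \ref{main} cannot be lowered.
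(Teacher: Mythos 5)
Your proof is correct. Note that for this particular statement the paper offers no argument of its own: Theorem \ref{Thm2.7} is imported verbatim from \cite{HIKR11} and used as a black box in the sharpness discussion, so there is no internal proof to compare against. Your construction is essentially the standard one behind the cited result: take a maximal totally isotropic subspace $U$ for $\Vert\cdot\Vert$ (of dimension $\frac{d-1}{2}$, which follows as you say because for odd $d$ the anisotropic kernel of the nondegenerate form $\sum x_i^2$ over $\F_q$, $q$ odd, has odd dimension at most $2$, hence dimension exactly $1$), set $E=U^{\perp}=U\oplus\F_q v$ with $\Vert v\Vert=\beta\neq 0$, and use $E-E=E$ to get $\Delta(E)=\{\beta t^2: t\in\F_q\}$, a set of size $\frac{q+1}{2}<q$. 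All the steps check out, including the descent of $\Vert\cdot\Vert$ to $U^{\perp}/U$ and the Witt-index computation you rightly flag as the point needing care. Two minor remarks: the hypothesis that $q$ is odd should be stated in the theorem itself rather than parenthetically (in characteristic $2$ the form $\sum x_i^2=(\sum x_i)^2$ is degenerate and the entire framework of the paper is vacuous), and your closing application to $E=F=E_0\times\F_q^l$ with $|E||F|=q^{k+2l+1}$ reproduces exactly the paper's own sharpness argument for Theorem \ref{main}, which is a good consistency check.
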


Let $E_1 \subset \F_q^k$ be a set as in theorem above and $E_2 = \F_q^l$. With $E = E_1 \times E_2$ we get $|E| \geq c q^\frac{2l+k+1}{2}$ and $B_{k,l}(E,E)=\Delta(E_1) \times \Delta(E_2) = \Delta(E_1) \times \F_q \neq \F_q \times \F_q$ since $\Delta(E_1) \neq \F_q$. Hence our result is sharp if $k$ is odd.

\section{Proof of Theorem \ref{main2}}

For $a,b \in \F_q$ let
$$
s(a,b):=|\{(x',x'',y',y'') \in E \times F: \Vert x'-y'\Vert =a, \Vert x''-y''\Vert =b \}|.
$$

We observe 
$$
\left( \sum_{a,b \in \F_q} s(a,b) \right)^2 = {|E|}^2{|F|}^2
$$
while at the same time Cauchy-Schwarz yields
$$
\left( \sum_{a,b \in \F_q} s(a,b) \right)^2 \leq B_{k,l}(E,F) \sum_{a,b \in \F_q} s(a,b)^2.
$$
Hence,
\begin{equation}
\frac{{|E|}^2{|F|}^2}{\sum_{a,b \in \F_q} s(a,b)^2} \leq B_{k,l}(E,F)
\label{lowerboundPE}
\end{equation}
so an upper bound on $\sum_{a,b \in \F_q} s(a,b)^2$ will provide a lower bound for $B_{k,l}(E,F)$.
Now
\begin{align*}
 s(a,b)^2 &=  \Big| \big\{(x',x'',y',y'',z',z'',w',w'') \in E \times F \times E \times F : \\ &  \Vert x'-y'\Vert =a=\Vert z'-w'\Vert , \Vert x''-y''\Vert =b=\Vert z''-w''\Vert  \big\} \Big| 
\end{align*}
so
\begin{align}
\label{sums^2}
\sum_{a,b \in \F_q} s(a,b)^2  & =  \Big| \big\{(x',x'',y',y'',z',z'',w',w'') \in E \times F \times E \times F : \\ \nonumber &  \Vert x'-y'\Vert =\Vert z'-w'\Vert , \Vert x''-y''\Vert =\Vert z''-w''\Vert  \big\} \Big| .
\end{align}

We now proceed as in \cite{BHIPR17}. For $\theta, \varphi \in SO_2(\F_q)$ we define $r_{\theta,\varphi}^E: \F_q^2 \times \F_q^2 \to \mathbb{C}$ by the following property.
$$
\sum_{u',u'' \in \F_q^2} r_{\theta,\varphi}^E(u',u'') f(u',u'') = \!\!\!\!\!\! \sum_{x',x'',z',z'' \in \F_q^2} f(x'-\theta z',x''-\varphi z'') E(x',x'') E (z',z'')
$$
for all $f: \F_q^2 \times \F_q^2 \to \mathbb{C}$

By setting $$f(u',u'')=\begin{cases} 1, & \text{if } u'=u'' \\ 0, & \text{otherwise} \end{cases}$$ it is easily seen that $r_{\theta,\varphi}^E$ is well defined and we get 
$$
r_{\theta,\varphi}^E(u',u'') = \left| \{(x',x'',z',z'') \in E \times E: x'-\theta z' = u', x''-\varphi z'' = u''\} \right|
$$
Therefore
\begin{multline}
\label{sumr^2}
\sum_{u',u'' \in \F_q^2} r_{\theta,\varphi}^E(u',u'') r_{\theta,\varphi}^F(u',u'')  =
\\ \left| \{(x',x'',z',z'',y',y'',w',w'') \in E^2 \times F^2: x'-\theta z' = y'-\theta w', x''-\varphi z'' =y''-\varphi w''\} \right|
\end{multline}

With $f(u',u'')=q^{-4}\chi(-u'\cdot m'- u''\cdot m'')$ we can also calculate the Fourier-transform
\begin{align*}
\widehat{r_{\theta,\varphi}^E}(m',m'') & =\sum_{u',u'' \in \F_q^2} r_{\theta,\varphi}^E(u',u'')q^{-4}\chi(-u'\cdot m'- u''\cdot m'') 
\\& = \!\!\!\! \sum_{x',x'',z',z'' \in \F_q^2} q^{-4} \chi(-(x'-\theta z')\cdot m'-(x''-\varphi z'')\cdot m'') E(x',x'') E (z',z'')
\\& = q^{4} \widehat{E}(m',m'') \overline{\widehat{E}(\theta m', \varphi m'')}
\end{align*}

Now our key observation is the following

\begin{lemma}
\label{norm and SOd}
Let $q$ a prime, $q \equiv 3 \mod 4$. Then for $x,y \in \F_q^2 \setminus \{ \vec{0} \}$ we have
$\Vert x \Vert =\Vert y \Vert $ if and only if there is a unique $\theta \in SO_2(\F_q)$ such that $x=\theta y$
\end{lemma}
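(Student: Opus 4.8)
The plan is to transfer the whole statement from the quadratic form $\|\cdot\|$ on $\F_q^2$ to the field norm on $\F_{q^2}$. Since $q\equiv 3\bmod 4$, the element $-1$ is a non-square in $\F_q$, so I may fix $i\in\F_{q^2}$ with $i^2=-1$, and the $\F_q$-linear bijection $(a,b)\mapsto a+bi$ identifies $\F_q^2$ with $\F_{q^2}$. Because $i^{q-1}=(-1)^{(q-1)/2}=-1$, the Frobenius conjugation $z\mapsto z^q$ sends $a+bi$ to $a-bi$, and hence $\|(a,b)\|=a^2+b^2=(a+bi)(a-bi)=N(z)$, where $N:\F_{q^2}\to\F_q$, $N(z)=z^{q+1}$, is the field norm. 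In particular the form is anisotropic, i.e. $\|v\|=0\iff v=\vec 0$, so on $\F_q^2\setminus\{\vec 0\}$ this identification carries $\|\cdot\|$ to the restriction of $N$ to $\F_{q^2}^\times$.

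Next I would identify the action of $SO_2(\F_q)$ on $\F_q^2$ with the action of $K:=\ker\!\big(N:\F_{q^2}^\times\to\F_q^\times\big)=\{\theta\in\F_{q^2}^\times : N(\theta)=1\}$ by multiplication: every $\theta\in SO_2(\F_q)$ has the shape $\theta=\big(\begin{smallmatrix} c & -d \\ d & c\end{smallmatrix}\big)$ with $c^2+d^2=1$, which on $\F_q^2=\F_{q^2}$ is exactly multiplication by $c+di\in K$, and this correspondence is a group isomorphism $SO_2(\F_q)\xrightarrow{\ \sim\ }K$. (One may also record that $|K|=(q^2-1)/(q-1)=q+1=|SO_2(\F_q)|$, but no counting is needed for the argument.)

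With these two translations the lemma is essentially a one-line verification. For the forward implication, take $x,y\in\F_q^2\setminus\{\vec 0\}$ with $\|x\|=\|y\|$; viewed in $\F_{q^2}^\times$ this says $N(x)=N(y)$, so by multiplicativity $N(xy^{-1})=1$, hence $\theta:=xy^{-1}\in K$ and $x=\theta y$. Uniqueness is immediate from the field structure: if $\theta y=\theta' y$ with $y\neq 0$ in $\F_{q^2}$, then $\theta=\theta'$. Conversely, if $x=\theta y$ with $\theta\in SO_2(\F_q)=K$, then $\|x\|=N(\theta y)=N(\theta)N(y)=N(y)=\|y\|$.

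The only genuinely delicate point, and the place where the hypothesis is used, is the very first step: $q\equiv 3\bmod 4$ is precisely what makes $-1$ a non-square, hence $x_1^2+x_2^2$ anisotropic and $\F_q[i]=\F_{q^2}$ a field rather than $\F_q\times\F_q$. This is what legitimizes dividing by $y$, guarantees $\|y\|\neq 0$, and forces the $SO_2$-action on each nonzero level set to be simply transitive. Without it (e.g. $q\equiv 1\bmod 4$) the null cone $\|v\|=0$ contains nonzero vectors and both existence and uniqueness break down. A purely combinatorial alternative — compute $|S_t^{1}|=q+1$ for $t\neq 0$ by Lemma \ref{spherecount}-type considerations, note $|SO_2(\F_q)|=q+1$, and check the action is free because an element of $SO_2(\F_q)$ fixing a nonzero vector would give a nontrivial zero of $x_1^2+x_2^2$ — also works, but the norm-map argument is cleaner and exhibits the role of the congruence condition most transparently.
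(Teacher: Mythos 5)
Your proof is correct and complete. The paper itself states this lemma as a ``key observation'' without proof (deferring implicitly to \cite{BHIPR17}), so there is nothing to compare line by line; your identification of $(\F_q^2,\Vert\cdot\Vert)$ with $(\F_{q^2},N)$ via $(a,b)\mapsto a+bi$, of $SO_2(\F_q)$ with $\ker(N:\F_{q^2}^\times\to\F_q^\times)$ via $\theta=\bigl(\begin{smallmatrix} c & -d\\ d & c\end{smallmatrix}\bigr)\mapsto c+di$, and the resulting one-line deduction ($N(xy^{-1})=1$, uniqueness by cancellation in a field) is exactly the standard argument and uses the hypothesis $q\equiv 3\bmod 4$ in the right place, namely to ensure $\F_q[i]$ is a field and the form is anisotropic. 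One small inaccuracy in your closing remark: when $q\equiv 1\bmod 4$ it is only the \emph{existence} half that fails (for nonzero null vectors on the two different isotropic lines); the action of $SO_2(\F_q)$ on $\F_q^2\setminus\{\vec 0\}$ is free in both cases, so uniqueness would survive. This does not affect the proof of the stated lemma.
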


This observation allows us to make the following connection
$$
\sum_{a,b \in \F_q} s(a,b)^2 \leq \sum_{\substack{u',u'' \in \F_q^2 \\ \theta, \varphi \in SO_2(\F_q)}} r_{\theta,\varphi}^E(u',u'') r_{\theta,\varphi}^F(u',u'')
$$
by comparing (\ref{sums^2}) and (\ref{sumr^2}) and seeing that 
$$
\Vert x' - y' \Vert = \Vert z' - w' \Vert \implies \exists \theta \in SO_2(\F_q): x' - \theta z' = y' - \theta w'.
$$

Now
\begin{align*}
\sum_{U \in \F_q^2\times \F_q^2 } r_{\theta,\varphi}^E(U) r_{\theta,\varphi}^F(U) &= \sum_{U \in \F_q^2\times \F_q^2 } \sum_{M \in \F_q^4} \chi(U M)\widehat{r_{\theta,\varphi}^E}(M) \sum_{N  \in \F_q^4} \chi(U N)\widehat{r_{\theta,\varphi}^F}(N)\\
&= \sum_{M \in \F_q^4} \widehat{r_{\theta,\varphi}^E}(M) \sum_{N  \in \F_q^4} \widehat{r_{\theta,\varphi}^F}(N) \sum_{U \in \F_q^2\times \F_q^2 } \chi(U (N+M)) \\
&= q^4 \sum_{M \in \F_q^4} \widehat{r_{\theta,\varphi}^E}(M) \overline{\widehat{r_{\theta,\varphi}^F}(N)}
\end{align*}

and it remains to find a bound for 
\begin{align}
\nonumber
\sum_{\theta, \varphi \in SO_2(\F_q)} &\sum_{u',u'' \in \F_q^2 } r_{\theta,\varphi}^E(u',u'') r_{\theta,\varphi}^F(u',u'')
= q^{4} \sum_{\theta, \varphi \in SO_2(\F_q)} \sum_{m',m'' \in \F_q^2 } \widehat{r_{\theta,\varphi}^E}(m',m'') \overline{\widehat{r_{\theta,\varphi}^F}(m',m'')} \\
&= q^{12}\sum_{m',m'' \in \F_q^2 }  \sum_{\theta, \varphi \in SO_2(\F_q)}\widehat{E}(m',m'') \overline{\widehat{E}(\theta m', \varphi m'')} \overline{\widehat{F}(m',m'')} \widehat{F}(\theta m', \varphi m'').
\label{sumEhats}
\end{align}

Again we will need to split the sum into three terms

\subsection{The term $m'=\vec{0}, m''=\vec{0}$}
Plugging into \eqref{sumEhats} we get
$$
q^{12} \sum_{\theta, \varphi \in SO_2(\F_q)}  {|\widehat{E}(\vec{0},\vec{0})|}^2 {| \widehat{F}(\vec{0},\vec{0})|}^2=
 q^{-4}{|E|}^2{|F|}^2 {|SO_2(\F_q)|}^2.
$$

\subsection{The term $m' \neq \vec{0}, m'' \neq \vec{0}$}
\begin{align*}
 & q^{12}\sum_{m',m'' \in \F_q^2\setminus \{ \vec{0} \} }    \widehat{E}(m',m'')  \overline{\widehat{F}(m',m'')}  \sum_{\theta, \varphi \in SO_2(\F_q)} \overline{\widehat{E}(\theta m', \varphi m'')} \widehat{F}(\theta m', \varphi m'')
 \\  = &
q^{12}\sum_{a,b \in \F_q \setminus \{ 0 \}} \sum_{\substack{ \Vert m' \Vert=a, \Vert m'' \Vert =b}}    \widehat{E}(m',m'')  \overline{\widehat{F}(m',m'')} \sum_{\theta, \varphi \in SO_2(\F_q)}\overline{\widehat{E}(\theta m', \varphi m'')} \widehat{F}(\theta m', \varphi m'')
\\ = & q^{12}\sum_{a,b \in \F_q \setminus \{ 0 \}} {\left| \sum_{\substack{ \Vert m' \Vert=a, \Vert m'' \Vert =b}}   \widehat{E}(m',m'')  \overline{\widehat{F}(m',m'')} \right|}^2
\end{align*}
where we used Lemma \ref{norm and SOd} in the last step.

We continue with a trivial estimate on one of the inner factors
\begin{align*}
& q^{12}\sum_{a,b \in \F_q \setminus \{ 0 \}} {\left| \sum_{\substack{ \Vert m' \Vert=a, \Vert m'' \Vert =b}}   \widehat{E}(m',m'')  \overline{\widehat{F}(m',m'')} \right|}^2
\\ \leq & q^{12} \sum_{a,b \in \F_q \setminus \{ 0 \}} \sum_{\substack{ \Vert m' \Vert=a, \Vert m'' \Vert =b}}    |\widehat{E}(m',m'')|^2 \sum_{ \Vert n' \Vert=a, \Vert n'' \Vert =b}    |\widehat{F}(n',n'')|^2 
\\ \leq & q^{12} \left( \sum_{a,b \in \F_q \setminus \{ 0 \}} \sum_{\substack{ \Vert m' \Vert=a, \Vert m'' \Vert =b}}    |\widehat{E}(m',m'')|^2 \right) \sum_{n',n'' \in \F_q^2\setminus \{ \vec{0} \} }    |\widehat{F}(n',n'')|^2 
\\ \leq & q^{12} \left( \sum_{m',m'' \in \F_q^2}    |\widehat{E}(m',m'')|^2 \right) \left( \sum_{n',n'' \in \F_q^2}    |\widehat{F}(n',n'')|^2 \right)
\\ = & q^{12} \left( q^{-4} \sum_{u', u'' \in \F_q^2}    |E(u',u'')|^2 \right) \left( q^{-4} \sum_{u', u'' \in \F_q^2}    |E(u',u'')|^2 \right)
\\=  &q^{4} |E| |F|.
\end{align*}

\subsection{The term $m' \neq \vec{0}, m'' = \vec{0}$}
As in the two previous cases we see
\begin{align}
\label{mixedterm2}
 \nonumber& q^{12}\sum_{m' \in \F_q^2 }  \sum_{\theta, \varphi \in SO_2(\F_q)}\widehat{E}(m',\vec{0}) \overline{\widehat{E}(\theta m',\vec{0})} \overline{\widehat{F}(m',\vec{0})} \widehat{F}(\theta m', \vec{0})
\\ & = q^{12} |SO_2(\F_q)|\sum_{m' \in \F_q^2\setminus \{ \vec{0} \} } \widehat{E}(m',\vec{0}) \overline{\widehat{F}(m',\vec{0})}   \sum_{\theta \in SO_2(\F_q)} \overline{\widehat{E}(\theta m',\vec{0})}  \widehat{F}(\theta m', \vec{0})
\end{align}

We will deal with the inner sum first. Let $0 \neq a=\Vert m'\Vert $.
\begin{align}
\label{startmixed}
\nonumber \sum_{\theta \in SO_2(\F_q)} \overline{\widehat{E}(\theta m',\vec{0})}  \widehat{F}(\theta m', \vec{0})
& \leq \sum_{\Vert m \Vert=a} \overline{\widehat{E}(m,\vec{0})}  \widehat{F}(m, \vec{0})
\\
 &\leq \sqrt{\sum_{\Vert m \Vert=a} {|\widehat{E}(m,\vec{0})|}^2  \sum_{\Vert n \Vert=a} {|\widehat{F}(n,\vec{0})|}^2 }
\end{align}

\begin{lemma}
\label{lemmaEhat(m,0)sphere}
For $E \subset \F_q^2$, $0 \neq a \in \F_q$ we get
$$
\sum_{\Vert m \Vert=a} | \widehat{E}(m,\vec{0})|^2 \leq 3^{\frac{1}{2}} q^{-6} \vert E \vert^\frac{3}{2} 
$$
\end{lemma}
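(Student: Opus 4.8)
I want to bound $\sum_{\Vert m\Vert=a}|\widehat E(m,\vec0)|^2$ for $E\subset\F_q^{2+2}$ and $0\neq a$. First expand using the definition of $\widehat E$:
$$
\sum_{\Vert m\Vert=a}|\widehat E(m,\vec0)|^2
= q^{-8}\sum_{x',z'\in\F_q^2}\Big(\sum_{\Vert m\Vert=a}\chi((x'-z')\cdot m)\Big)\,\big|\{x'':(x',x'')\in E\}\cap\{z'':(z',z'')\in E\}\big|,
$$
where the inner count over the second coordinate appears because summing $E(x',x'')E(z',z'')$ over $x'',z''$ with the constraint $x''=z''$ (coming from the $m''=\vec0$ slot) collapses to counting common fibers. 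Write $\nu(x',z')$ for that fiber-overlap count; note $\nu(x',z')\le\min(|E_{x'}|,|E_{z'}|)$ where $E_{x'}$ is the fiber, and $\sum_{x'}|E_{x'}|=|E|$. The character sum $\sum_{\Vert m\Vert=a}\chi((x'-z')\cdot m)=q^2\widehat{S_a^{1}}(x'-z')$ is, by Lemma \ref{kloosterman} together with Lemma \ref{spherecount}, equal to $|S_a^1|$ when $x'=z'$ (size $q+O(1)$) and $O(q^{1/2}\cdot q/q)=O(\sqrt q)$ — more precisely bounded by $2q\cdot q^{-1}=2$ times $q$, i.e. $|q^2\widehat{S_a^1}(u)|\le 2\sqrt q$ for $u\neq\vec0$ — wait, the Kloosterman bound gives $|q^2\widehat{S_a^1}(u)|\le 2q^{1-1}=2\sqrt q\cdot\sqrt q$... let me just record $|q^2\widehat{S_a^1}(u)|\le 2q\cdot q^{-1}\cdot q = $ I mean $q^2\cdot 2q^{-(2+1)/2}=2q^{1/2}$, yes $2q^{1/2}$ for $u\neq \vec 0$.

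**Main term and error term.** So the diagonal $x'=z'$ contributes $q^{-8}|S_a^1|\sum_{x'}\nu(x',x')=q^{-8}(q+O(1))\sum_{x'}|E_{x'}|^2$, and by Cauchy–Schwarz $\sum_{x'}|E_{x'}|^2\le(\max_{x'}|E_{x'}|)\cdot|E|\le q^2|E|$, but more usefully I expect to bound $\sum_{x'}|E_{x'}|^2\le q^2\cdot$ something; actually since each $|E_{x'}|\le q^2$ one gets $\le q^2|E|$. The off-diagonal is at most $q^{-8}\cdot 2q^{1/2}\sum_{x'\neq z'}\nu(x',z')\le q^{-8}\cdot 2q^{1/2}\,|E|^2$ (since $\sum_{x',z'}\nu(x',z')\le\sum_{x',z'}\min(|E_{x'}|,|E_{z'}|)\le(\sum_{x'}|E_{x'}|)^2=|E|^2$, or even just $\le |E|^2$ trivially). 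Combining: $\sum_{\Vert m\Vert=a}|\widehat E(m,\vec0)|^2\le q^{-7}|E|^2+2q^{-15/2}|E|^2+\ldots$, and I need to see this is $\le 3^{1/2}q^{-6}|E|^{3/2}$. That forces a comparison $|E|^2\lesssim q|E|^{3/2}$, i.e. $|E|^{1/2}\lesssim q$ — which need NOT hold in general. So the trivial bounds are too weak, and the real input must be an $L^2$-estimate on $\widehat E(\cdot,\vec0)$ restricted to the sphere that exploits the \emph{incidence/energy} structure, analogous to Lemma \ref{lemmaEhat(m,0)} but sharpened on a circle of radius $a$.

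**The hard part.** The main obstacle is exactly getting the exponent $3/2$ on $|E|$ rather than the naive $2$. The route I would take: interpret $\sum_{\Vert m\Vert=a}|\widehat E(m,\vec0)|^2$, after the expansion above, as $q^{-8}\sum_{x',z'}|q^2\widehat{S_a^1}(x'-z')|\,\nu(x',z')$ is not quite right since the kernel can be negative; instead keep it as $q^{-6}\sum_{x',z':\,\Vert x'-z'\Vert=a}\nu(x',z')+(\text{genuine Kloosterman cancellation term})$ — using $q^2\widehat{S^1_a}(u)=$ (number-theoretic expression) which for $q\equiv 3\bmod 4$ equals $|S^1_a|\cdot\mathbf 1_{u=0}$ plus a Gauss-sum multiple of $\chi$ of a quadratic form in $u$; crucially $S^1_a$ in the plane for $q\equiv3\bmod4$ has size exactly $q-1$ and is a single $SO_2$-orbit. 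The quantity $\sum_{\Vert x'-z'\Vert=a}\nu(x',z')$ counts quadruples $(x',z',x'',\text{with }x''\text{ in both fibers})$ with $\Vert x'-z'\Vert=a$; this is an incidence count between the $q+O(q^{1/2})$... Actually I suspect the clean path is: apply Lemma \ref{lemmaEhat(m,0)}-type reasoning to get $\sum_{m'}|\widehat E(m',\vec0)|^2\le q^{-6}|E|$ wait that's for $\F_q^{2+2}$ giving $q^{-k-l}|E|=q^{-4}|E|$, then interpolate against the pointwise bound $|\widehat E(m',\vec0)|\le q^{-4}|E|$ and the fact that the circle $\Vert m'\Vert=a$ has $q-1$ points, getting $\sum_{\Vert m'\Vert=a}|\widehat E(m',\vec 0)|^2\le(q-1)(q^{-4}|E|)^2 = q^{-7}|E|^2$ roughly; comparing with $q^{-6}|E|$ shows the first wins when $|E|\le q$. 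Taking the geometric mean of $q^{-4}|E|$ and $q^{-7}|E|^2$ with weights $(2/3,1/3)$ yields $q^{-5}|E|^{4/3}$ — not matching. The correct interpolation must instead use an intermediate estimate, likely $\sum_{\Vert m'\Vert =a}|\widehat E(m',\vec0)|^2\le q^{-?}|E|^{?}$ coming from a second-moment computation over the full circle that I would derive by the orbit/Kloosterman expansion above; getting that intermediate bound with the right constant $3^{1/2}$ (reflecting $|S^1_a|=q-1<3q$ or a count of three solutions) is where all the work sits, and I would model it closely on the argument in \cite{BHIPR17} / \cite{CEHIK10} that produced the $4/3$-type exponent in the two-dimensional Erd\H os–Falconer problem.
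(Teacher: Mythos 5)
There is a genuine gap: your proposal correctly diagnoses that the trivial expansion only yields a bound of the shape $q^{-7}|E|^2$ (or $q^{-4}|E|$ from summing over all of $\F_q^2$), and that neither of these, nor any interpolation between them, produces the exponent $\tfrac32$ on $|E|$ — but you then stop, explicitly deferring "where all the work sits." The missing ingredient is an $L^4$ extension (restriction) estimate for the circle, proved by an additive-energy count. Concretely, the paper sets $g(m)=\overline{\widehat{E}(m,\vec0)}$ on $S_a$, writes $\Sigma:=\sum_{\Vert m\Vert=a}|\widehat E(m,\vec0)|^2=q^{-2}\sum_{x'}n(x')\,\widehat{S_ag}(x')$ with $n(x')=\sum_{x''}E(x',x'')$, and applies H\"older with exponents $\tfrac43$ and $4$. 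The fourth moment $\sum_{x'}|\widehat{S_ag}(x')|^4$ expands into a sum over quadruples $u,v,u',v'\in S_a$ with $u+v=u'+v'$; since two distinct circles of the same radius meet in at most two points, for $u+v\neq\vec0$ the only solutions are $(u',v')\in\{(u,v),(v,u)\}$, and the degenerate case $u+v=\vec0$ contributes one more family — this is the source of the constant $3$ and gives $\sum_{x'}|\widehat{S_ag}(x')|^4\le 3q^{-6}\Sigma^2$. The other H\"older factor is bounded by $q^{1/2}|E|^{3/4}$ using the trivial fiber bound $n(x')\le q^2$, so one arrives at the self-improving inequality $\Sigma\le 3^{1/4}q^{-3}|E|^{3/4}\Sigma^{1/2}$, which bootstraps to $\Sigma\le 3^{1/2}q^{-6}|E|^{3/2}$.

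Your sketched alternatives do not recover this. The Kloosterman/Gauss-sum expansion of the kernel $\sum_{\Vert m\Vert=a}\chi((x'-z')\cdot m)$ leads exactly to the $|E|^2$-type bounds you computed, and the interpolation you attempt between $q^{-4}|E|$ and $(q-1)(q^{-4}|E|)^2$ yields $|E|^{4/3}$-type exponents with the wrong powers of $q$; no convexity argument between those two endpoints can produce $q^{-6}|E|^{3/2}$. The essential new input is the geometric fact that the circle has additive energy $O(|S_a|^2)$ rather than the trivial $|S_a|^3$, deployed through the duality/H\"older/bootstrap scheme above. Without that step the lemma is not proved.
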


\begin{proof}

With the notation introduced in Lemma \ref{kloosterman} and $g: \F_q^2 \to \mathbb{C}$ where $g(m)=\overline{\widehat{E}(m,\vec{0}) S_a(m)}$. we can write this as
\begin{align*}
\sum_{m \in \F_q^2} \widehat{E}(m,\vec{0}) S_a(m) g(m)
&= \sum_{m \in \F_q^2} q^{-4}\sum_{x',x'' \in \F_q^2} \chi(-x'\cdot m) E(x',x'') S_a(m) g(m) \\
&=q^{-2} \sum_{x' \in \F_q^2} \left(\sum_{x'' \in \F_q^2} E(x',x'') \right) \widehat{S_a g}(x')
\end{align*}

Using H\"older's Inequality with $q=\frac{4}{3}, r=4$ we can bound this by
\begin{align}
\label{Hoelder}
\leq q^{-2} \left( \sum_{x' \in \F_q^2} \left(\sum_{x'' \in \F_q^2} E(x',x'') \right)^\frac{4}{3} \right)^\frac{3}{4} \left( \sum_{x' \in \F_q^2}  \vert \widehat{S_a g}(x') \vert^4 \right)^\frac{1}{4}
\end{align}

We will first find an estimate for the latter factor. By using the definition of the Fourier transform we get:
\begin{align}
\sum_{x' \in \F_q^2}  \vert \widehat{S_a g}(x') \vert^4
=q^{-6} \sum_{\substack{u,v,u',v' \in S_a\\ u+v=u'+v'}} g(u)g(v)\overline{ g(u')g(v') }
\label{Fefferman_setup}
\end{align}
Here we use the Fefferman trick. For fixed $u,v \in S_a$, $u \neq -v$ we want to find $u',v' \in S_a$ such that $u+v=u'+v'$. In other words we want to find $u' \in S_a$ such that $(u+v-u') \in S_a$, so $u'$ is in the intersection of the circles $\{x \in \F_q^2: \Vert x\Vert=a \}$ and $\{x \in \F_q^2: \Vert x-(u+v)\Vert=a \}$ which has at most two solutions as the circles are not identical $u+v \neq 0$. But we already know two solutions, namely $u$ and $v$. So either $u'=u$ and $v'=v$ or $u'=v$ and $v'=u$. If $u = -v$ we get $u' \in S_a$ and $v'=-u'$.
Therefore (and by noting that $g(-u)=\overline{g(u)}$) we can write \eqref{Fefferman_setup} as
\begin{align*}
 &q^{-6} \left( \sum_{u,v \in S_a} 2 g(u)g(v)\overline{ g(u)g(v) } + \sum_{u,u' \in S_a} g(u)g(-u)\overline{ g(u')g(-u') } \right) \\
= & 3 q^{-6} \sum_{u,v \in S_a} \vert g(u) \vert^2 \vert g(v) \vert^2 \\
= & 3 q^{-6} \left( \sum_{u \in S_a} \vert g(u) \vert^2 \right)^2 \\
= & 3 q^{-6} \left(\sum_{\Vert u \Vert=a} | \widehat{E}(u,\vec{0})|^2 \right)^2
\end{align*}
The other factor of \eqref{Hoelder} can be dealt with as follows
\begin{align*}
\left( \sum_{x' \in \F_q^2} \! \! \left(\sum_{x'' \in \F_q^2} E(x',x'') \right)^\frac{4}{3} \right)^\frac{3}{4} \! \! \! \!
\leq \left(\sum_{x' \in \F_q^2}
  \! \!  \left( \sum_{x'' \in \F_q^2} E(x',x'') \right) \! \!
 \left( \sum_{x'' \in \F_q^2} E(x',x'') \right)^\frac{1}{3} \right)^\frac{3}{4}
 \!\!\!
\leq q^{\frac{1}{2}} \vert E \vert^\frac{3}{4}
\end{align*}

Therefore we have
\begin{align*}
\sum_{\Vert m \Vert=a} | \widehat{E}(m,\vec{0})|^2 \leq 3^{\frac{1}{4}} q^{-2} q^{\frac{1}{2}} \vert E \vert^\frac{3}{4} q^{-\frac{3}{2}} \left(\sum_{\Vert m \Vert=a} | \widehat{E}(m,\vec{0})|^2\right)^\frac{1}{2}
\end{align*}
so
\begin{align*}
\sum_{\Vert m \Vert=a} | \widehat{E}(m,\vec{0})|^2 \leq 3^{\frac{1}{2}} q^{-4} q \vert E \vert^\frac{3}{2} q^{-3} = 3^{\frac{1}{2}} q^{-6} \vert E \vert^\frac{3}{2} 
\end{align*}

\end{proof}

Continuing from \eqref{mixedterm2} and using \eqref{startmixed} and Lemma \ref{lemmaEhat(m,0)sphere} we see

\begin{align*}
& q^{12} |SO_2(\F_q)|\sum_{m' \in \F_q^2\setminus \{ \vec{0} \} } \widehat{E}(m',\vec{0}) \overline{\widehat{F}(m',\vec{0})}   \sum_{\theta \in SO_2(\F_q)} \overline{\widehat{E}(\theta m',\vec{0})}  \widehat{F}(\theta m', \vec{0}) \\
\leq &  q^{12} |SO_2(\F_q)| \sum_{m' \in \F_q^2\setminus \{ \vec{0} \} } \widehat{E}(m',\vec{0}) \overline{\widehat{F}(m',\vec{0})} \cdot 3^{\frac{1}{2}} q^{-6} {|E|}^\frac{3}{4} {|F|}^\frac{3}{4}  
\end{align*}

Finally we need to deal with
\begin{align*}
\sum_{m' \in \F_q^2\setminus \{ \vec{0} \} } \widehat{E}(m',\vec{0}) \overline{\widehat{F}(m',\vec{0})}
& \leq \sqrt{\sum_{m' \in \F_q^2\setminus \{ \vec{0} \} } {|\widehat{E}(m',\vec{0})|}^2 \sum_{m' \in \F_q^2\setminus \{ \vec{0} \} } {|{\widehat{F}(m',\vec{0})}|}^2 } \\
&\leq \sqrt{q^{-8} |E| |F|}
\end{align*}

Putting those results together we find that \eqref{mixedterm2} is bounded by
\begin{align*}
 C q^{12} q q^{-4} \sqrt{|E||F|} q^{-6} {|E|}^\frac{3}{4} {|F|}^\frac{3}{4} 
=C q^3 {|E|}^\frac{5}{4}{|F|}^\frac{5}{4}.
\end{align*}

So we can bound the whole sum \eqref{sumEhats} by 
$$
q^{4} |E||F| + C q^3 \big(|E||F|\big)^\frac{5}{4}  +  q^{-4}|E|^2|F|^2 |SO_d(\F_q)|^2.
$$
Therefore we get from \eqref{lowerboundPE}
$$
\min \left\lbrace \frac{|E||F|}{3 q^{4}},\frac{\big(|E||F|\big)^\frac{3}{4} }{3 C q^3},\frac{q^{4}}{3 |SO_2(\F_q)|^2} \right\rbrace \leq P(E).
$$
Hence it is enough that
$$
c q^2 \leq \frac{\big(|E||F|\big)^\frac{3}{4} }{3 C q^3} \iff c q^5 \leq \big(|E||F|\big)^\frac{3}{4}  \iff c q^{\frac{20}{3}} \leq |E||F|
$$
since in this case also
$$
\frac{|E||F|}{3 q^{4}} \geq \frac{c q^\frac{20}{3}}{q^4} \geq c q^2.
$$

\begin{remark}[Sharpness of results]

Let $p$ a prime, with $p \equiv 3 \mod 4$. Consider 
$E=\F_p^2 \times L$, where 

$$ L=\{(a,0): a \in \lbrace 0,\dots,p^{1-\varepsilon} \rbrace \},$$

Then $|E| = p^{3-\varepsilon}$ and $|\Delta(L)| = 2p^{1-\varepsilon}$, so $|B(E,E)|=o(p^2)$. Hence the $6+\frac{2}{3}$ exponent in Theorem \ref{main2} is potentially not best possible, but we definitely cannot go below $6$. 
\end{remark}

\end{document}